\documentclass[11pt]{amsart}
\usepackage{amssymb,latexsym,amsmath,amsthm}
\usepackage{todonotes}
\usepackage[colorlinks=true,linkcolor=red,urlcolor=blue,citecolor=blue]{hyperref}
\usepackage[latin1]{inputenc}
% Environment ---------------------------------------------------------
\newtheorem{thm}{Theorem}[section]
\newtheorem{prop}{Proposition}[section]
\newtheorem{cor}[thm]{Corollary}
\newtheorem{lem}[thm]{Lemma}

\newtheorem{rem}[thm]{Remark}

\numberwithin{equation}{section}
%\newproof{proof}{Proof}

\renewcommand{\Re}{\mathbb R}
\renewcommand{\epsilon}{\varepsilon}
\newcommand{\Ren}{\Re^n}
\newcommand{\Se}{\mathbb S}

\renewcommand{\epsilon}{\varepsilon}
\renewcommand{\phi}{\varphi}

\newcommand{\norm}[1]{\|#1\|}
\newcommand{\normbl}[1]{\left\|#1\right\|_L}

\newcommand{\normk}[1]{\|#1\|_{K}}
\newcommand{\norml}[1]{\|#1\|_{L}}
\newcommand{\st}{\colon}

\DeclareMathOperator{\dist}{dist}

\begin{document}
\title{PUSH FORWARD MEASURES AND CONCENTRATION PHENOMENA}

 \author[PUSH FORWARD MEASURES AND CONCENTRATION PHENOMENA]{C. Hugo~Jim\'{e}nez${}^\dag$${}^\ddag$\and M\'arton ~Nasz\'odi${}^*$\and Rafael~Villa ${}^\dag$}

 \address{C. Hugo~Jim\'{e}nez\and Rafael~Villa, Universidad de Sevilla, Departamento de An\'{a}lisis Matem\'{a}tico, Apartado de Correos 1160,
  Sevilla, Spain 41080} 
\email{carloshugo@us.es,villa@us.es}
 \address{M\'arton ~Nasz\'odi,E\"{o}tv\"{o}s University,
P\'{a}zm\'{a}ny P\'{e}ter S\'{e}t\'{a}ny 1/C
Budapest, Hungary 1117}
 \email{nmarci@math.elte.hu}

\thanks{${}^\dag$ Authors partially supported by the Spanish Ministry of Science and Innovation grant MTM2009-08934 and by the Junta de Andaluc\'{i}a, grant P08-FQM-03543.}
 \thanks{${}^\ddag$ Author partially supported by CONACyT}
\thanks{${}^*$ Author partially supported by the Hung. Nat. Sci. Found. (OTKA) grant no. K72537.}
\maketitle
\begin{abstract}
In this note we study how a concentration phenomenon can be transmitted from one measure $\mu$ to a push-forward measure $\nu$. In the first part, we push forward $\mu$ by $\pi:supp(\mu)\rightarrow \Ren$, where $\pi x=\frac{x}{\norm{x}_L}\norm{x}_K$, and obtain a concentration inequality in terms of the medians of the given norms (with respect to $\mu$) and the Banach-Mazur distance between them. This approach is finer than simply bounding the concentration of the push forward measure in terms of the Banach-Mazur distance between $K$ and $L$. As a corollary we show that any normed probability space with good concentration is far from any high dimensional subspace of the cube. In the second part, two measures $\mu$ and $\nu$ are given, both related to the norm $\norm{\cdot}_L$, obtaining a concentration inequality in which it is involved the Banach-Mazur distance between $K$ and $L$ and the Lipschitz constant of the map that pushes forward $\mu$ into $\nu$. As an application, we obtain a concentration inequality for the cross polytope with respect to the normalized Lebesgue measure and the $\ell_1$ norm.
\end{abstract}
\keywords{Concentration of Measure, Push-forward Measure, Symmetric Convex Body}

\section{Introduction}

The concentration of measure phenomenon was first studied by L\'{e}vy in \cite{Lev}, for the Haar measure on Euclidian spheres. The definition of the concentration function was first introduced in \cite{Am-Mil}. It was formalized in \cite{G-M} and further analyzed in \cite{M-S}. A comprehensive survey, cf. \cite{Led}.

In this note, for two $n$-dimensional normed spaces $X$ and $Y$, we pose the problem to transmit a concentration of measure phenomenon from $X$ to $Y$. A starting point in this direction is Proposition \ref{prop:dec}, showing that concentration inequalities can be preserved through Lipschitz maps. This result can be applied to push forward the Gaussian measure to the Euclidean unit ball (see \cite{Led}).

Bobkov and Ledoux studied the transference of concentration of measure and other properties via push forward measures. In \cite{Bo-Le} the authors recover concentration results, in the form of Logarithmic Sobolev inequalities, in a uniformly convex space $X$ (from \cite{G-M2,A-B-V}) by pushing forward an adequate measure into the uniform distribution on the unit ball of $X$.  Caffarelli's contraction Theorem \cite{Caf1} and its many applications have been extensively used to transfer concentration and other properties from the Gaussian measure to measures with densities of the form $exp(-V(x))dx$ with $V:\Ren\rightarrow \Re$ satisfying $V''(x)\geq c I_n$ for $c>0$. Thus, important results from Bakry-\'{E}mery \cite{B-E}, Bakry-Ledoux \cite{Ba-L} and others can be recovered. The described approach along with Brenier's optimal transport map \cite{Bre} have been exploited by many authors (e.g. \cite{Cord,Lat-Woj}) to extend properties that were known on measures such as the Gaussian or the Exponential to more general measures. For stability of concentration and other properties under maps that are Lipschitz only on average we refer to the work of E. Milman \cite{MilE} and for a very recent extension of Caffarelli's contraction Theorem and many applications see \cite{MilE-Y} and references therein.

Let $X=(\Ren,\|\cdot\|_K, \mu)$ be a normed probability space and $Y=(\Ren,\|\cdot\|_L)$ a normed space. Let $\nu$ be the push forward of $\mu$ by the natural map $\pi:supp(\mu)\to \Ren$ given by $\pi(x)=\frac{x}{\norm{x}_L}\norm{x}_K$. In the first part of this note we study how concentration properties of $\mu$ are inherited by $\nu$. We give a bound for the concentration function of $\nu$ in terms of the quotient
$\frac{\lambda m_K}{m_L} $, where $m_L$ and $m_K$ denote the medians of $\norm{\cdot}_L$ and $\normk{\cdot}$ resp. on $X$ with respect to $\mu$, and $\lambda$ is such that $\|\cdot\|_L\leq \|\cdot\|_K\leq \lambda \|\cdot\|_L$. In fact, if this quotient is bounded and $\mu$ verifies a concentration property, then also does $\nu$.

Examples as $\ell_2^n$ and $\ell_1^n$ show that the result allows one to transmit the concentration of measure phenomenon between two spaces quite far apart in the Banach-Mazur distance. As an application, we show that $\ell_\infty^n$ (and its high dimensional subspaces) are far (in the Banach--Mazur sense) from any space with a good concentration.

In the second part of the paper, we are given one norm $\|\cdot\|_L$ on $\Ren$ and two measures $\mu$ and $\nu$, with densities $d\mu=f(\norm{x}_L)dx$, $d\nu=g(\norm{x}_L)dx$ with respect to the Lebesgue measure. We compare the concentrations of $\mu$ and $\nu$. As an application, we prove concentration inequalities on $\ell_p^n$ for $1\leq p\leq 2$ (as those obtained in \cite{Ar-Vil}).
\section{Notations and previous results}

Throughout this note, all measures are regular Borel measures.

%\begin{defn}\label{defn:pushforward}
Let $T:X\to Y$ be a map from a measure space $(X,\mu)$ to a set $Y$. The push forward measure of $\mu$ by $T$ is defined by
$$\nu(A)=\mu(T^{-1}(A)),$$
for any Borel set $A\subset Y.$
%\end{defn}

%\begin{defn}\label{defn:expansion}
A \emph{metric probability space} is a triple $(X,d,\mu)$, where $d$ and $\mu$ are a metric and a probability measure on $X$ respectively. For a set $A\subset X$ and $\epsilon>0$, the \emph{$\epsilon$-expansion of $A$} is defined by
$$
A_\epsilon^d=\{x\in X: d(x,a)\leq\epsilon \mbox{ for some } a\in A\}.
$$

The \emph{concentration function} of $(X,d,\mu)$ is defined by
\[\alpha_{\left(X,d,\mu\right)}(\epsilon)=
\sup\left\{
1-\mu(A_\epsilon^d): A \subset X\mbox{ measurable, }\mu(A)\ge1/2\right\}.\]
%\end{defn}

%\begin{defn}\label{defn:concentration}

%\end{defn}
Recall that $m_f\in\Re$ is called a \emph{median} of a measurable function $f:X\to\Re$ on a probability space $(X,\mu)$ if $\mu(\{x\in X: f(x)\leq m_f\})\geq1/2$ and $\mu(\{x\in X: f(x)\geq m_f\})\geq1/2$. It is well known that if $f$ is Lipschitz with Lipschitz constant $L$, then for any $\epsilon >0$
\begin{equation}\label{eq:conmedian}
\mu(\{x\in X: |f(x)-m_f|\geq \epsilon \})\leq2\alpha_{\left(X,d,\mu\right)}(\epsilon/L).
\end{equation}

The following result shows a concentration inequality for the push forward measure induced by a Lipschitz map (see \cite{Led} for a proof).
\begin{prop}\label{prop:dec}
Let $\varphi$ be a L-Lipschitz map between two metric spaces $(X,d_1)$ and $(Y,d_2)$. Let $\mu$ be a Borel probability measure on $X$ and denote by $\nu$ the push forward measure of $\mu$ by $\varphi$ on $Y$. Then, for every $r>0$,
\begin{equation}\label{eqn:ineqpushfor}
    \alpha_{(Y,d_2,\nu)}(r)\leq\alpha_{(X,d_1,\mu)}(r/L).
\end{equation}
\end{prop}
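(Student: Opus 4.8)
The plan is to transfer the concentration statement from $X$ to $Y$ by pulling test sets back along $\varphi$. Fix $r>0$ and let $B\subset Y$ be an arbitrary measurable set with $\nu(B)\ge 1/2$; the goal is to bound $1-\nu(B_r^{d_2})$ by $\alpha_{(X,d_1,\mu)}(r/L)$ and then take the supremum over all such $B$. First I would set $A:=\varphi^{-1}(B)\subset X$. Since $\varphi$, being Lipschitz, is Borel measurable, $A$ is measurable, and by the definition of the push-forward measure $\mu(A)=\mu(\varphi^{-1}(B))=\nu(B)\ge 1/2$, so $A$ is an admissible competitor in the supremum defining $\alpha_{(X,d_1,\mu)}$.

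The heart of the argument is the set inclusion
$$A_{r/L}^{d_1}\subset\varphi^{-1}\bigl(B_r^{d_2}\bigr),$$
which I would prove directly: if $x\in A_{r/L}^{d_1}$, pick $a\in A$ with $d_1(x,a)\le r/L$; then $\varphi(a)\in\varphi(A)\subset B$, and the $L$-Lipschitz property gives $d_2(\varphi(x),\varphi(a))\le L\,d_1(x,a)\le r$, so $\varphi(x)\in B_r^{d_2}$, i.e. $x\in\varphi^{-1}(B_r^{d_2})$. Taking $\mu$-measure, and using monotonicity together with $\nu(B_r^{d_2})=\mu(\varphi^{-1}(B_r^{d_2}))$, yields $\mu(A_{r/L}^{d_1})\le\nu(B_r^{d_2})$, whence
$$1-\nu(B_r^{d_2})\le 1-\mu\bigl(A_{r/L}^{d_1}\bigr)\le\alpha_{(X,d_1,\mu)}(r/L),$$
the last step because $\mu(A)\ge 1/2$. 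As $B$ was arbitrary, taking the supremum over all measurable $B$ with $\nu(B)\ge 1/2$ gives \eqref{eqn:ineqpushfor}.

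I do not expect a serious obstacle here: the real content is simply applying the Lipschitz bound in the correct direction, noting that an $r$-neighbourhood in $Y$ is covered by the $\varphi$-image of an $(r/L)$-neighbourhood in $X$ because $\varphi$ contracts distances by a factor of at most $L$. The one point that needs a word of care is the measurability of the expansions $A_{r/L}^{d_1}$ and $B_r^{d_2}$, which occur inside $\mu$ and $\nu$; this is dealt with by the standard conventions for the concentration function (restricting the suprema to closed sets, whose $\epsilon$-expansions are closed, or reading $\mu(A_\epsilon^d)$ through inner measure and invoking monotonicity of outer measure), and it does not affect the final inequality.
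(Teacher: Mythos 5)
Your proof is correct and is exactly the standard argument: pulling the test set back along $\varphi$, establishing the inclusion $A_{r/L}^{d_1}\subset\varphi^{-1}(B_r^{d_2})$ via the Lipschitz bound, and comparing measures. The paper does not reproduce a proof (it cites Ledoux's book), but this is precisely the argument given there, and your remark on the measurability of the expansions is an appropriate, if minor, point of care.
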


This result can be used, for example, to push forward Gaussian measure to the Euclidean unit ball (see \cite{Led}).

Let $X,Y$ be $n$-dimensional normed spaces. The \emph{Banach-Mazur distance} between  $X$ and $Y$ is defined by

\begin{equation*}
    d_{BM}(X,Y)=\inf\{\lambda>0\ :\ L\subset TK\subset \lambda L \quad \mbox{for}\  T\in GL_n\},
\end{equation*}

where $K,L$ denote the unitary balls for the norms $\|\cdot\|_K$ and $\|\cdot\|_L$ respectively.

When the space $X$ is $\Ren$ endowed with a norm $\|\cdot\|_K$, and $d$ is the metric induced by this norm, we write for simplicity $A_\epsilon^K$ for the $\epsilon$-expansion of $A\subset\Ren$ and $\alpha_{\left(K,\mu\right)}$ for the concentration function $(\Ren,d,\mu)$.

\section{Concentration properties transferred through $\pi$}

Let $X=(\Ren,\|\cdot\|_K)$ and $Y=(\Ren,\|\cdot\|_L)$ be $n$-dimensional metric spaces (with the induced metrics), and let $\mu$ be a probability measure on $X$. Let $m_L$ and $m_K$ be medians of the functions $\|.\|_L:\Ren\to\Re$ and $\|.\|_K:\Ren\to\Re$, both with respect to $\mu$. In what follows, it is supposed that $\mu$ has a positive median $m_K$.

We consider the push forward $\nu$ of $\mu$ by the natural map $\pi:supp(\mu)\rightarrow \Ren$ given by $\pi(x)= \frac{x}{\norm{x}_L}\norm{x}_K$.
Assuming that $\|.\|_K\leq \|.\|_L\leq \lambda\|.\|_K$, it is easy to show that $\pi$ is $(2\lambda+1)$-Lipschitz which, by Proposition~\ref{prop:dec}, yields an upper bound on the concentration function of $\nu$ in terms of $\lambda$. Note that after a proper linear transformation applied to $L$, $\lambda$ is equal to the Banach--Mazur distance of $K$ and $L$. Our first result is a strengthening of this bound: we may use $\lambda\frac{m_K}{m_L}$ instead of $\lambda$.
\begin{thm}\label{thm:main}
Let $X=(\Ren,\|.\|_K)$ and $Y=(\Ren,\|.\|_L)$ be normed spaces with $\|.\|_K\leq\|.\|_L\leq \lambda \|.\|_K$. Let $\mu$ be a regular Borel probability measure on $X$ with concentration function $\alpha_{(K,\mu)}$. Let $\pi:supp(\mu)\rightarrow \Ren$ be the map $\pi(x)= \frac{x}{\norm{x}_L}\norm{x}_K$, and denote by $\alpha_{(L,\nu)}$ the concentration function of $\nu$, the push forward measure of $\mu$ by $\pi$. Then for every $\epsilon>0$ such that $16\alpha_{(K,\mu)}\left(\epsilon m_L/7\lambda m_K \right)\leq 1$ we have
\begin{equation*}
\alpha_{(L,\nu)}(\epsilon)\leq16\alpha_{(K,\mu)}\left(\frac{\varepsilon m_L}{14\lambda m_K }\right).
\end{equation*}
\end{thm}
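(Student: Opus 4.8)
The plan is to compare the metric structures on the source and target via the map $\pi$, and to run a median-based argument rather than applying Proposition~\ref{prop:dec} directly with the crude Lipschitz constant $(2\lambda+1)$. The key observation is that $\pi$ is not merely globally Lipschitz, but behaves much better on the ``bulk'' of the mass, i.e.\ on the set where $\norm{x}_K$ is comparable to $m_K$ and $\norm{x}_L$ to $m_L$. On such a set the scaling factor $\norm{x}_K/\norm{x}_L$ is comparable to $m_K/m_L$, so the effective Lipschitz constant of $\pi$ in the $L$-metric is of order $\lambda m_K/m_L$, not $\lambda$. First I would fix the ``good set'' $G=\{x\in\operatorname{supp}(\mu)\st \norm{x}_K,\norm{x}_L \text{ are within a fixed constant factor of } m_K, m_L\}$, using \eqref{eq:conmedian} (both $\norm{\cdot}_K$ and $\norm{\cdot}_L$ are $1$-Lipschitz with respect to $\norm{\cdot}_K$ since $\norm{\cdot}_K\le\norm{\cdot}_L\le\lambda\norm{\cdot}_K$, after noting $\norm{\cdot}_L$ is $\lambda$-Lipschitz w.r.t.\ $\norm{\cdot}_K$); this shows $\mu(G)$ is close to $1$, with deficit controlled by $\alpha_{(K,\mu)}$ evaluated at a point of order $m_K$.

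Next I would estimate, for $x,y\in G$, the quantity $\norm{\pi(x)-\pi(y)}_L$ in terms of $\norm{x-y}_K$. Writing $\pi(x)=x\cdot\frac{\norm{x}_K}{\norm{x}_L}$, a direct triangle-inequality splitting (add and subtract $y\cdot\frac{\norm{x}_K}{\norm{x}_L}$) gives
\begin{equation*}
\norm{\pi(x)-\pi(y)}_L \le \frac{\norm{x}_K}{\norm{x}_L}\norm{x-y}_L + \norm{y}_L\left|\frac{\norm{x}_K}{\norm{x}_L}-\frac{\norm{y}_K}{\norm{y}_L}\right|.
\end{equation*}
On $G$ the first factor is $O(\lambda m_K/m_L)$ and $\norm{x-y}_L\le\lambda\norm{x-y}_K$; the second term, after clearing denominators and using $|\,\norm{x}_K-\norm{y}_K|\le\norm{x-y}_K$ and $|\,\norm{x}_L-\norm{y}_L|\le\lambda\norm{x-y}_K$ together with the two-sided bounds on $G$, is again $O(\lambda m_K/m_L)\norm{x-y}_K$. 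Tracking constants carefully should yield that $\pi$ restricted to $G$ is Lipschitz from $(\,\cdot\,,\norm{\cdot}_K)$ to $(\,\cdot\,,\norm{\cdot}_L)$ with constant at most (something like) $7\lambda m_K/m_L$, which is where the $7$ and $14$ in the statement originate.

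Finally I would assemble the concentration bound. Take $A\subset\Ren$ with $\nu(A)\ge1/2$; then $\pi^{-1}(A)\cap G$ still has $\mu$-measure close to $1/2$ (if $\mu(G)$ is close enough to $1$, which is guaranteed by the hypothesis $16\alpha_{(K,\mu)}(\cdots)\le1$), and by the $\mu$-concentration of $X$ its $(\,\epsilon m_L/(7\lambda m_K)\,)$-neighborhood in the $K$-metric, intersected with $G$, has $\mu$-measure at least $1-2\alpha_{(K,\mu)}(\text{that radius})$ or so. Pushing this forward and using the restricted Lipschitz estimate, the image lands inside $A_\epsilon^L$, so $\nu(A_\epsilon^L)$ is large; bookkeeping the various halving losses and the measure of $G$ produces the factor $16$. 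The main obstacle, and the step requiring the most care, is the Lipschitz estimate of $\pi$ on $G$: one must choose the defining constants of $G$ so that they simultaneously make $\mu(G)$ large enough for the measure-splitting to work and keep the Lipschitz constant down to $7\lambda m_K/m_L$. A secondary subtlety is that ``median'' arguments lose factors of $2$ at each step, so one must be economical — handling the expansion of $A$ and of its preimage in one combined step rather than separately — to land exactly at $16$ rather than a larger power of $2$.
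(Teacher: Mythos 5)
Your strategy is the same as the paper's: restrict attention to the set where $\norm{\cdot}_K$ and $\norm{\cdot}_L$ are close to their medians, show that on (a neighborhood of) this set $\pi$ moves points in the $L$-metric by at most a constant times $\lambda\frac{m_K}{m_L}$ times the $K$-displacement, and then transfer the concentration of $\mu$ to $\nu$. Your triangle-inequality estimate of $\norm{\pi x-\pi y}_L$ is essentially the computation in the paper (the paper compares $\norm{x}_K/\norm{x}_L$ with $m_K/m_L$ directly, so that only $y$, not $x$, needs to lie in the good set; this spares you the extra intersection with $G$ after expanding, but that is a cosmetic difference).

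The genuine gap is in your final assembly. You propose to expand $J=\pi^{-1}(A)\cap G$ ``by the $\mu$-concentration of $X$'' and claim its neighborhood has measure at least $1-2\alpha_{(K,\mu)}(\cdot)$. But the definition of the concentration function only applies to sets of measure at least $1/2$, and all you know is $\mu(J)\geq 1/2-4\alpha_{(K,\mu)}(\delta m_L/\lambda)$, which may be strictly below $1/2$. This is precisely where the paper invokes the two-set inequality (\ref{eq:Ledoux}), $\mu(A)\mu(B)\leq 4\alpha(\dist(A,B)/2)$, valid for arbitrary Borel sets: applied to $J$ and the complement of its $K$-expansion it gives $\mu\bigl((J_{\delta m_L/\lambda}^{K})^{c}\bigr)\leq 4\alpha_{(K,\mu)}(\delta m_L/2\lambda)/\mu(J)$. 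Note that the halving of the radius in this lemma, not the Lipschitz bookkeeping, is what turns the $7$ into the $14$ in the conclusion, and the hypothesis $16\alpha_{(K,\mu)}(\epsilon m_L/7\lambda m_K)\leq 1$ is exactly what forces $\mu(J)\geq 1/4$ and hence the constant $16$. As written, your ``bookkeeping the various halving losses \ldots produces the factor $16$'' conceals this missing ingredient; without (\ref{eq:Ledoux}), or an equivalent argument (e.g.\ first showing by contradiction that the $r$-expansion of $J$ has measure greater than $1/2$ and then expanding a second time), the step does not go through as stated.
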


For the proof, we quote the following lemma from \cite{Led}.

\begin{lem}
Let $\mu$ be a probability measure on the Borel sets of a metric space $(X,d)$ with concentration function $\alpha_{(X,d,\mu)}$. For any two non-empty Borel sets $A$ and $B$ in $X$,

\begin{equation}\label{eq:Ledoux}
    \mu(A)\mu(B)\leq 4\alpha_{(X,d,\mu)}(\dist(A,B)/2)
\end{equation}
where $\dist(A,B)=\inf\{d(x,y)\ :\ x\in A, y\in B\}.$
\end{lem}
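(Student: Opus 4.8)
The plan is to run a short dichotomy built on a single elementary remark. Write $\alpha=\alpha_{(X,d,\mu)}$ and $r=\dist(A,B)/2$; since $\alpha$ is defined only for positive arguments we may assume $r>0$. The remark is: if $S\subseteq X$ is a Borel set with $\dist(z,A)>r$ for every $z\in S$, then $A\cap S^d_r=\emptyset$ --- a point of $A\cap S^d_r$ would lie within $r$ of some $z\in S$, forcing $\dist(z,A)\le r$. Hence, if such an $S$ moreover satisfies $\mu(S)\ge 1/2$, then $A\subseteq X\setminus S^d_r$, and the definition of the concentration function gives $\mu(A)\le 1-\mu(S^d_r)\le\alpha(r)$; the mirror statement holds with $A$ and $B$ interchanged.

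First I would note that, since $\dist(A,B)=2r$, every point of $B$ is at distance $\ge 2r>r$ from $A$, so $B\cap A^d_r=\emptyset$, and symmetrically $A\cap B^d_r=\emptyset$. Then I split on the measure of $A^d_r$. If $\mu(A^d_r)\le 1/2$, apply the remark to $S=\{x:\dist(x,A)>r\}=X\setminus A^d_r$ (which has measure $\ge 1/2$): it gives $\mu(A)\le\alpha(r)$, hence $\mu(A)\mu(B)\le\alpha(r)\le 4\alpha(r)$. If instead $\mu(B^d_r)\le 1/2$, the symmetric argument yields $\mu(B)\le\alpha(r)$ and we are done. In the remaining case $\mu(A^d_r)>1/2$, I would use $A^d_r$ \emph{itself} as a set of measure $\ge 1/2$ in the definition of $\alpha$: then $\mu\bigl((A^d_r)^d_r\bigr)\ge 1-\alpha(r)$, while any point of $B$ lying in $(A^d_r)^d_r$ is within $2r$ of $A$ and hence, because $\dist(A,B)=2r$, \emph{exactly} at distance $2r$ from $A$; therefore $\mu(B)\le\alpha(r)+\mu\bigl(\{x:\dist(x,A)=2r\}\bigr)$, and symmetrically for $\mu(A)$. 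Multiplying these two bounds and using $\alpha\le 1$ gives $\mu(A)\mu(B)\le 4\alpha(r)$; the factor $4$ leaves room for the ``spherical'' terms, and when $\mu$ is absolutely continuous --- as in every application in this paper --- those terms vanish (a level set $\{\dist(\cdot,A)=\rho\}$ with $\rho>0$ is Lebesgue-null, since $|\nabla\dist(\cdot,A)|=1$ a.e.\ off $\overline{A}$), and one even obtains $\mu(A)\mu(B)\le\alpha(r)$.

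I expect the third case to be the only real obstacle: the concentration function constrains only the $r$-expansions of sets of measure at least $1/2$, whereas $A$ and $B$ may both be small, so its definition cannot be applied to them directly; the device of replacing $A$ by its large expansion $A^d_r$ repairs this, at the cost of the boundary-sphere bookkeeping above. The first two cases drop out immediately once one sees that $X\setminus A^d_r$, respectively $X\setminus B^d_r$, is exactly the set on which to invoke the elementary remark.
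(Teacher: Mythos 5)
The paper offers no proof of this lemma (it is quoted from \cite{Led}), so your argument has to stand on its own. Your preliminary remark and Cases 1 and 2 are correct, modulo the harmless point that $X\setminus A_r^d$ may be strictly larger than $\{x:\dist(x,A)>r\}$ when infima are not attained; the fix is simply to take $S=X\setminus A_r^d$ and verify $A\cap S_r^d=\emptyset$ directly, which your own computation already does. The genuine gap is Case 3. Writing $s_A=\mu(\{\dist(\cdot,A)=2r\})$ and $s_B=\mu(\{\dist(\cdot,B)=2r\})$, your two bounds multiply to $\alpha(r)^2+\alpha(r)(s_A+s_B)+s_As_B$, and the term $s_As_B$ is not controlled by $\alpha(r)$ in any way: the two shells can carry a fixed fraction of the mass while $\alpha(r)$ is arbitrarily small, so ``the factor $4$ leaves room'' is not a proof. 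The fallback to absolutely continuous $\mu$ does not match the lemma's hypotheses (an arbitrary metric probability space), nor even all of its uses in this paper: Theorem \ref{thm:main} is applied to measures supported on $\partial K$, to the Haar measure on $\Se^{n-1}$, and in Theorem \ref{thm:binffar} to a general symmetric Borel measure on $K$.

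In fact Case 3 cannot be closed under the paper's closed-enlargement convention, because the stated inequality can fail there: take $X=\{0,r,2r\}$ with $d(0,r)=d(r,2r)=r$, $d(0,2r)=2r$, $\mu$ assigning masses $1/4,1/2,1/4$, and $A=\{0\}$, $B=\{2r\}$. Every set of measure $\ge 1/2$ has closed $r$-enlargement equal to $X$, so $\alpha(r)=0$, yet $\mu(A)\mu(B)=1/16$; note $\mu(A_r^d)=\mu(B_r^d)=3/4$, i.e.\ this is exactly your Case 3. Ledoux's lemma is stated for open enlargements $A_r=\{x:\dist(x,A)<r\}$, and with that convention the correct argument is a two-case dichotomy with no third case: either $\mu(X\setminus A_r)\ge 1/2$, in which case $A$ is disjoint from $(X\setminus A_r)_r$ and $\mu(A)\le\alpha(r)$; or $\mu(A_r)\ge 1/2$, in which case $(A_r)_r\subseteq\{x:\dist(x,A)<2r\}$ is disjoint from $B$ and $\mu(B)\le\alpha(r)$. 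Either way $\mu(A)\mu(B)\le\min(\mu(A),\mu(B))\le\alpha(r)$. The boundary shells you are fighting are an artifact of the closed enlargement; the cure is the strict inequality in the definition (equivalently, replacing $r$ by any $\rho<r$ and letting $\rho\uparrow r$), not the constant $4$.
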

\begin{proof}[Proof of Theorem~\ref{thm:main}]
Let $A\subset Y$ be a Borel set with $1/2\leq\nu(A)=\mu(\pi^{-1}(A))$, and let $\varepsilon>0$ be given.
Since
\begin{equation*}
 \|.\|_K\leq\|.\|_L\leq \lambda \|.\|_K,
\end{equation*}

it follows that $m_K\leq m_L\leq \lambda m_K$.

Let $0<\delta=\frac{\epsilon}{7m_K}$ and
denote by
$$G_L=\{x\in \Ren: (1-\delta)m_L< \|x\|_L<(1+\delta)m_L\}$$
$$G_K=\{x\in \Ren: (1-\delta)m_K< \|x\|_K<(1+\delta)m_K\}.$$

Let $J=\pi^{-1}(A)\cap G_L\cap G_K$. By (\ref{eq:conmedian}) we have that $$\mu(J)\geq1/2-2\alpha_{(K,\mu)}(\delta m_L/\lambda)-2\alpha_{(K,\mu)}(\delta m_K)\geq1/2-4\alpha_{(K,\mu)}(\delta m_L/\lambda).$$

We claim that

\begin{equation}\label{eq:Jcontpia}
J_{\frac{\delta m_L}{\lambda }}^K\subset\pi^{-1}\left(A_\epsilon^L\right)
\end{equation}
Indeed, let $x\in J_{\frac{\delta m_L}{\lambda }}^K$. Then, there exists $y\in J$ with $\norm{x-y}_K\leq{\frac{\delta m_L}{\lambda }}$. Since $y\in J$ we have that $\pi y\in A$, $|\norm{y}_K-m_K|<\delta m_K$ and $|\norm{y}_L-m_L|<\delta m_L$.  We will show that $\pi x\in A_{\varepsilon}^L$.
\begin{align*}
% \nonumber to remove numbering (before each equation)
    \nonumber\norm{\pi x&-\pi y}_L =\normbl{\frac{x}{\norm{x}_L}\norm{x}_K-\frac{y}{\norm{y}_L}\norm{y}_K} \\
         &\leq\norm{x}_L\left|\frac{\norm{x}_K}{\norm{x}_L}-\frac{m_K}{m_L}\right|+\frac{m_K}{m_L}\norm{x-y}_L+\norm{y}_L\left|\frac{\norm{y}_K}{\norm{y}_L}-\frac{m_K}{m_L}\right|\\
     \nonumber&\leq\frac{1}{m_L}\Big[\left|\norm{x}_Km_L-\norm{x}_Lm_K\right|+m_K\lambda\normk{x-y}+\left|\norm{y}_Km_L-\norm{y}_Lm_K\right|\Big].
\end{align*}

Now,\\
    \begin{eqnarray*}
      % \nonumber to remove numbering (before each equation)
      \nonumber\left|\norm{y}_Km_L-\norm{y}_Lm_K\right| &\leq& m_L\left|\norm{y}_K-m_K \right|+m_K\left|\norm{y}_L-m_L \right| \\
       &<& m_L \delta m_K+m_K\delta m_L=2\delta m_Lm_K.
    \end{eqnarray*}

Finally, we note that\\
      \begin{align*}
      % \nonumber to remove numbering (before each equation)
         |\normk{x}m_L&-\norm{x}_Lm_K| \\
         &\leq m_L\norm{x-y}_K+\left|\norm{y}_Km_L-\norm{y}_Lm_K \right|+m_K\norm{x-y}_L \\
         &\leq m_L\frac{\delta m_L}{\lambda }+2\delta m_Lm_K+\lambda m_K\frac{\delta m_L}{\lambda }\\
         &\leq\delta m_Lm_K+2\delta m_Lm_K+\delta m_Lm_K= 4\delta m_Lm_K.
      \end{align*}
Thus, \\
    $$\norm{\pi x-\pi y}_L\leq4\delta m_K+\delta m_K+2\delta m_K=7\delta m_K,$$
and (\ref{eq:Jcontpia}) follows.

By (\ref{eq:Ledoux}) and (\ref{eq:Jcontpia}) we have
    $$\nu((A_\epsilon^L)^c)=\mu(\pi^{-1}(A_\epsilon^L)^c)\leq{\mu((J_{\frac{\delta m_L}{\lambda }}^K)^c)}\leq\frac{4\alpha_{(K,\mu)}(\frac{\delta m_L}{2\lambda })}{\mu(J)}\leq\frac{4\alpha_{(K,\mu)}(\frac{\delta m_L}{2\lambda})}{1/2-4\alpha_{(K,\mu)}(\frac{\delta m_L}{\lambda})},$$ and hence, for every $\epsilon>0$ such that $16\alpha_{(K,\mu)}\left(\epsilon m_L/7\lambda m_K \right)\leq 1$, $$\nu(A_\epsilon^c)\leq16\alpha_{(K,\mu)}\left(\frac{\varepsilon m_L}{14\lambda m_K}\right).$$

    Since the latter holds for every $A\subset Y$ with $\nu(A)\geq1/2$ we conclude
    $$\alpha_{(L,\nu)}(\epsilon)\leq16\alpha_{(K,\mu)}\left(\frac{\varepsilon m_L}{14\lambda m_K}\right),$$
    as we wanted.
\end{proof}

\begin{rem}
For a measure $\mu$ supported on $\partial K$ we can substitute $m_K=1$ and obtain  $\alpha_{(L,\nu)}(\epsilon)\leq16\alpha_{(K,\mu)}(\frac{\varepsilon m_L}{14\lambda})$ for $\nu$ the push-forward measure of $\mu$ into $\partial L$ by $\pi$.
\end{rem}

\begin{rem}
If $\mu=\lambda_K$ is the normalized Lebesgue measure restricted to $K$, then a median for $\norm{\cdot}_K$ is $2^{-1/n}$. In this case the bound $m_K\leq 1$ can be used to obtain $\alpha_{(L,\nu)}(\epsilon)\leq16\alpha_{(K,\mu)}(\frac{\varepsilon m_L}{14\lambda})$ without (asymptotically) losing much.
\end{rem}

\begin{rem}\label{rem:ineqforbeta}
If we define
\begin{equation*}
\beta=\beta((K,\mu),L):=\inf\{\lambda\frac{m_K}{m_{T^{-1}L}}\st L\subseteq TK\subseteq \lambda L\, \mbox{ for } T\in GL_n \}
\end{equation*}
where $m_{T^{-1}L}$ denotes a median of $\|\cdot\|_{T^{-1}L}$ in $K$ with respect to $\mu$ then
\begin{equation*}
\alpha_{(L,\tilde{\nu})}(\epsilon)\leq16\alpha_{(K,\mu)}\left(\frac{\epsilon}{14\beta}\right),
\end{equation*}
where $\tilde{\nu}$ is the push-forward of the measure $\mu$ with respect to $\tilde{\pi} x=\frac{Tx}{||Tx||_L}||x||_K$ and $T\in GL_n$ is the map where the infimum above is attained.
Note that $\beta\leq d_{BM}(X,Y)$.
\end{rem}

For simplicity we will denote $\beta((K,\lambda_K),L)$ by $\beta(K,L).$

This number, for the case $X=\ell_2^n$ endowed with the Haar measure on the Euclidean unit sphere $\mathbb{S}^{n-1}$, plays a central role in Milman's proof of Dvoretzky's theorem giving the dimension $k(Y)$ of $\epsilon$-almost Euclidean subspaces of $Y$: $k(Y)\geq c(\epsilon)\frac{n}{\beta^2}$ (see Theorem 4.2 in \cite{M-S}).
The inequality above gives us information about $\alpha_{(L,\nu)}$ as long as $\beta$ is bounded from above. This parameter $\beta$ shows in some sense how different $X$ and $Y$ are in terms of a given $\mu$. This result, after pushing forward the Haar measure on $\mathbb{S}^{n-1}$ (or the normalized Lebesgue measure on the Euclidean unit ball) to $\ell_1^n$, gives us a strong concentration of $\nu$ in $\ell_1^n$. The latter is possible because for this particular pair of spaces the parameter $\beta$ is bounded by a constant (although $d_{BM}(\ell_2^n,\ell_1^n)$ is exactly $\sqrt{n}$). See Remark \ref{rem:lpl2} for similar applications.

Note that, in case $\beta$ is bounded as before,  a \lq\lq good" concentration (normal, exponential, etc.) for $\alpha_{(K,\mu)}$ implies an asymptotically similar concentration for $\alpha_{(L,\nu)}$.

It is known that if the measure $\mu$ is $\log$-concave, combining Markov's inequality, Borell's lemma, and a Paley-Zygmund type argument, it can be seen that $\mathbb{E}\normk{\cdot}$ and $m_K$ are equivalent up to numeric constants. Thus, theorem \ref{thm:main} can be rewritten in the form obtaining an inequality of the type
\begin{equation*}
\alpha_{(L,\tilde{\nu})}(\epsilon)\leq16\alpha_{(K,\mu)}\left(\frac{\epsilon}{c\tilde{\beta}}\right),
\end{equation*}

where $c$ is a universal constant and
\begin{equation*}
\tilde{\beta}=\tilde{\beta}((K,\mu),L):=\inf\{\lambda\frac{\mathbb{E}\normk{\cdot}}{\mathbb{E}\|{\cdot}\|_{T^{-1}L}}\st L\subseteq TK\subseteq \lambda L\, \mbox{ for } T\in GL_n \}.
\end{equation*}

\begin{rem}\label{rem:lpl2}
A straight forward computation (see \cite{M-S}) shows that for $B_2^n$ endowed with the Lebesgue measure and $1\leq p<2$, $\tilde{\beta}(B_2^n,B_p^n)\leq b_p$ where $b_p$ depends only on $p$. Therefore, we obtain a concentration phenomenon on $B_p^n$ with respect to the push-forward of the Lebesgue measure on $B_2^n$ to $B_p^n$. This result is analogous to the one obtained in \cite{Ar-Vil} (with respect to a different measure) or in \cite{S-Z} (with respect to a different measure and a different metric).
\end{rem}

\begin{rem}
For $2<p<\infty$, the bound $\tilde{\beta}(B_2^n,B_p^n)\leq C_p n^{\frac{1}{2}-\frac{1}{p}}$ (see \cite{M-S}) implies the concentration inequality $\alpha_{(B_p^n,\nu)}(\epsilon)\leq C_3\exp\{-c_3\epsilon^2 n^{2/p}\}$. Compare with the result obtained in \cite{A-B-V} or \cite{G-M2} for any uniformly convex space (with respect to the Lebesgue measure).
\end{rem}

\begin{rem}
If we assume that $B_2$ is the John ellipsoid of a $0$-symmetric convex body $L$, then using $$\int_{S^{n-1}}\norml{x}d\sigma(x)\geq\int_{S^{n-1}}\|x\|_\infty d\sigma(x)$$ (cf. \cite{M-S} pp. 23-24) we obtain the general bound

$$\tilde{\beta}(B_2^n,L)\leq C\sqrt{\frac{n}{\log n}}.$$
\end{rem}
This estimate seems to be sharp.
\subsection{Applications}

Using the well known estimate for the mean of the $\sup$ norm with respect to the Haar measure on $\Se^{n-1}$
\[\int_{S^{n-1}}\|x\|_\infty d\sigma(x)\geq C\sqrt{\log n},\]
we get
\begin{equation}\label{eq:betal2linf}
\tilde{\beta}(B_2^n,B_\infty^n)\leq C \sqrt{\frac{n}{\log n}}.
\end{equation}

This bound does not yield any good concentration for the push forward of the Haar measure on $\Se^{n-1}$ to the cube $B_\infty^n$ by $\pi$.
In this section we show that no measure on $B_\infty^n$ has good concentration (Corollary~\ref{cor:farlinf}). we prove a more general result, which may be of independent interest: If an $n$-dimensional space with good concentration is close (in the Banach--Mazur sense) to a subspace of $\ell_\infty^N$ then $N$ is large (compared to $n$).
Compare this result with the one obtained in \cite{Tal1}, where a concentration property is obtained for $B_\infty^n$ endowed with a metric different from the supremum norm.

Recall that a \emph{$d$-embedding} between normed spaces is any lineal operator $T:X\rightarrow Y$ such that $a\|x\|_X\leq \|Tx\|_Y\leq b\|x\|_X$ with $a^{-1}b\leq d$.

\begin{thm}\label{thm:farlinf}
Let $X$ be an $n$-dimensional normed space, $K$ its closed unit ball and $\mu$ a symmetric Borel probability measure supported on $K$.

If there exists a $d$-embedding from $X$ into $\ell_\infty^N$, then, for any $0<\epsilon<1/d$ such that $\alpha_\mu(\epsilon)>0$, we have
$$N\geq\frac{1}{2}\alpha_\mu(\epsilon)^{-1}(1-\mu(d\epsilon K))$$
\end{thm}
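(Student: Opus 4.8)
The plan is to unfold the $d$-embedding into $N$ coordinate functionals and to control each of them \emph{individually} by the one-dimensional estimate~\eqref{eq:conmedian}, finishing with a union bound over the $N$ coordinates. This is the crucial choice: pushing $\mu$ forward to $\ell_\infty^N$ wholesale and invoking Proposition~\ref{prop:dec} (or the machinery around $\tilde\beta$) loses a factor of order $\sqrt{N/\log N}$, whereas treating the coordinates one at a time does not.

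First I would normalize: after replacing $T$ by $T/b$ we may assume the embedding $T\st X\to\ell_\infty^N$ satisfies $\tfrac1d\|x\|_X\le\|Tx\|_\infty\le\|x\|_X$ for every $x\in\Ren$. Write $Tx=(\langle y_1,x\rangle,\dots,\langle y_N,x\rangle)$ and set $f_i(x)=\langle y_i,x\rangle$. From $|f_i(x)|\le\|Tx\|_\infty\le\|x\|_X$ we get $\|y_i\|_{X^*}\le1$, so each $f_i$ is $1$-Lipschitz with respect to the metric of $X$. Because $\mu$ is symmetric, $\mu(\{f_i\le0\})=\mu(\{f_i\ge0\})$; since these two sets cover $\Ren$, each has measure at least $1/2$, so $0$ is a median of $f_i$. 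Hence~\eqref{eq:conmedian} (with Lipschitz constant $1$ and median $0$) gives
$$\mu\left(\{x:|f_i(x)|\ge\epsilon\}\right)\le2\alpha_\mu(\epsilon)\qquad(1\le i\le N).$$

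Next I would compare the super-level set of $\|\cdot\|_X$ with the union of those of the $f_i$. If $\|x\|_X>d\epsilon$ then $\|Tx\|_\infty\ge\tfrac1d\|x\|_X>\epsilon$, so $|f_i(x)|>\epsilon$ for at least one $i$; that is, $\Ren\setminus d\epsilon K\subseteq\bigcup_{i=1}^N\{x:|f_i(x)|\ge\epsilon\}$. Taking $\mu$-measures, using that $\mu$ is a probability measure, and applying the union bound with the displayed inequality yields $1-\mu(d\epsilon K)\le 2N\alpha_\mu(\epsilon)$; dividing by $2\alpha_\mu(\epsilon)>0$ gives the asserted lower bound on $N$. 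The hypothesis $\epsilon<1/d$ plays no role in the inequality itself — it only guarantees $d\epsilon K\not\supseteq K$, so that $1-\mu(d\epsilon K)$ has a chance of being positive.

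There is no real technical obstacle once the coordinatewise strategy is in place; the only points requiring care are choosing the normalization of $T$ so that each $f_i$ is exactly $1$-Lipschitz, and observing that symmetry of $\mu$ pins the median of every linear functional at $0$ — this is precisely what lets~\eqref{eq:conmedian} be applied with clean constants. The content of the theorem is conceptual rather than computational: it says that the cheap union bound over coordinates beats any attempt to transport $\mu$ to the cube as a whole.
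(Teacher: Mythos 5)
Your proof is correct and follows essentially the same route as the paper's: the coordinate functionals $f_i=\pi_i\circ T$ are $1$-Lipschitz with median $0$ by the symmetry of $\mu$, each tail $\mu(\{|f_i|\ge\epsilon\})$ is bounded by $2\alpha_\mu(\epsilon)$, and a union bound combined with $\|x\|\le d\|Tx\|_\infty$ gives $1-\mu(d\epsilon K)\le 2N\alpha_\mu(\epsilon)$. The only cosmetic differences are that you invoke~\eqref{eq:conmedian} directly and state the final inclusion in contrapositive form, whereas the paper re-derives the tail bound from the half-space expansion $\{f_i\le0\}+\epsilon K$ and phrases the last step via an auxiliary radius $r$.
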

\begin{proof}
Let $T:X\rightarrow \ell_\infty^N$ be an embedding such that $d^{-1}\|x\|\leq\|Tx\|_\infty\leq\|x\|$ for every $x\in X$. Consider the projection onto the $i^{th}$-coordinate $\pi_i:\ell_\infty^N\rightarrow \Re$ and the linear functional $f_i=\pi_i\circ T\, (1\leq i\leq N)$. For every $x\in X$, we have $|f_i(x)|\leq\|Tx\|_\infty\leq\|x\|$. Let $\epsilon>0$ and define, for $1\leq i\leq N$, the sets $A_i=\{x\in K\st |f_i(x)|\leq \epsilon\}.$ Using the symmetry of $\mu$ and the inclusion
$$\{x\in K\st f_i(x)\leq \epsilon\}\supset\{x\in K\st f_i(x)\leq0\}+\epsilon K,$$
we have $\mu\{x\in K\st f_i(x)>\epsilon\}\leq\alpha_\mu(\epsilon)$. Then
$$\mu\left(A_i^c\right)=2\mu\{x\in K\st f_i(x)>\epsilon\}\leq2\alpha_\mu(\epsilon)$$

for $1\leq i\leq N$. Therefore

$$\displaystyle{\mu\left(\bigcap_{i=1}^NA_i\right)\geq 1- \sum_{i=1}^N\mu\left(A_i^c\right)\geq1-2N\alpha_\mu(\epsilon)}.$$

Thus, for any $r\in(0,1)$ such that $\mu(rK)<1-2N\alpha_\mu(\epsilon)$, we have that $\displaystyle{\bigcap_{i=1}^NA_i\nsubseteq rK}$, and there exists $\displaystyle{x\in\bigcap_{i=1}^NA_i}$ with $\|x\|>r$. From this
$$r<\|x\|\leq d\|Tx\|_\infty\leq d\epsilon.$$

Consequently, $\mu(d\epsilon K)\geq 1-2N\alpha_\mu(\epsilon)$, as we wanted.
\end{proof}

\begin{cor}\label{cor:farlinf}
Let $\alpha_{(B_\infty^n,\nu)}(\epsilon)$ be the concentration function of a symmetric Borel probability measure $\nu$ on $B_\infty^n$. For any $0<\epsilon<1$,

    \begin{equation*}
    \alpha_{(B_\infty^n,\nu)}(\epsilon)\geq\frac{1}{2n}\left(1-\nu(\epsilon B_\infty^n)\right).
    \end{equation*}

\end{cor}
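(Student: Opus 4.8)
The plan is to obtain Corollary~\ref{cor:farlinf} as the special case of Theorem~\ref{thm:farlinf} in which the ambient space is $\ell_\infty^n$ itself. First I would take $X=\ell_\infty^n$, so that its closed unit ball is $K=B_\infty^n$, set $\mu=\nu$, and choose $N=n$ together with the identity map on $\ell_\infty^n$, which is a $1$-embedding (with $a=b=1$, hence $d=1$ in the definition of a $d$-embedding). All hypotheses of Theorem~\ref{thm:farlinf} are then met, for the range $0<\epsilon<1/d=1$.

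Next, for $\epsilon\in(0,1)$ with $\alpha_{(B_\infty^n,\nu)}(\epsilon)>0$, I would quote the conclusion of Theorem~\ref{thm:farlinf}, which in this case reads
$$n\geq\frac{1}{2}\,\alpha_{(B_\infty^n,\nu)}(\epsilon)^{-1}\bigl(1-\nu(\epsilon B_\infty^n)\bigr),$$
and rearrange it: multiplying through by the positive quantity $\alpha_{(B_\infty^n,\nu)}(\epsilon)/n$ gives exactly
$$\alpha_{(B_\infty^n,\nu)}(\epsilon)\geq\frac{1}{2n}\bigl(1-\nu(\epsilon B_\infty^n)\bigr),$$
as desired.

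Finally I would dispose of the degenerate case $\alpha_{(B_\infty^n,\nu)}(\epsilon)=0$. Here I would appeal to the intermediate bound $\mu(d\epsilon K)\geq 1-2N\alpha_\mu(\epsilon)$ obtained inside the proof of Theorem~\ref{thm:farlinf}, which requires no positivity of $\alpha_\mu(\epsilon)$; specialized as above it gives $\nu(\epsilon B_\infty^n)\geq 1$, so that $1-\nu(\epsilon B_\infty^n)=0$ and the asserted inequality holds trivially. For a uniform write-up I would in fact phrase the whole argument around this intermediate bound rather than the rearranged statement of Theorem~\ref{thm:farlinf}, thereby avoiding the case split altogether.

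There is essentially no obstacle: all the work is in Theorem~\ref{thm:farlinf}. The only points needing a moment's attention are checking that the identity on $\ell_\infty^n$ counts as a $d$-embedding with $d=1$ (so that the admissible range $0<\epsilon<1$ in the corollary is exactly $0<\epsilon<1/d$), and the harmless bookkeeping for $\alpha(\epsilon)=0$ just described.
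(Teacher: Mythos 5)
Your proposal is correct and matches the paper's own (one-line) proof, which is precisely to apply Theorem~\ref{thm:farlinf} to the identity on $\ell_\infty^n$ as a $1$-embedding with $N=n$ and $\mu=\nu$; you simply spell out the rearrangement and the harmless $\alpha=0$ case that the paper leaves implicit.
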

\begin{proof}
Consider the identity on $\ell_\infty^n$.
\end{proof}

\begin{thm}\label{thm:binffar}
Let $K\subset\Re^n$ be a convex body,  and $\mu$ a symmetric probability measure on $K$ with concentration function $\alpha_{(K,\mu)}(\epsilon)\leq Ce^{-c\epsilon^2n}$.  Assume that $\displaystyle{m_K>\frac{1}{2}\sqrt{\frac{\log(16C)}{\log{(64Cn)}}}}$. Then

$$\beta=\beta((K,\mu),B_\infty^n)\geq\frac{\sqrt{\log(16C)}}{28}\frac{\sqrt{cn}}{\log{(64Cn)}}$$
\end{thm}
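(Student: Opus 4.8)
The plan is to confront the \emph{universal} lower bound for the concentration of symmetric measures on the cube (Corollary~\ref{cor:farlinf}) with the transference upper bound of Remark~\ref{rem:ineqforbeta}. Both estimates apply, at the same radius, to one and the same object: the push-forward measure $\tilde\nu$ attached to $\beta=\beta((K,\mu),B_\infty^n)$, which turns out to be a symmetric probability measure supported on $B_\infty^n$.

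First I would record the structure of $\tilde\nu$. Let $T\in GL_n$ realize the infimum defining $\beta$, put $\tilde\pi x=\frac{Tx}{\norm{Tx}_\infty}\norm{x}_K$, and let $\tilde\nu$ be the push-forward of $\mu$ by $\tilde\pi$. Since $Tx/\norm{Tx}_\infty$ is a unit vector of $\ell_\infty^n$, one has $\norm{\tilde\pi x}_\infty=\norm{x}_K$; hence $\tilde\pi^{-1}(\epsilon B_\infty^n)=\epsilon K$ and $\tilde\nu(\epsilon B_\infty^n)=\mu(\epsilon K)$ for every $\epsilon>0$. Moreover $\tilde\pi(-x)=-\tilde\pi x$ and $\mu$ is symmetric, so $\tilde\nu$ is symmetric; and $\tilde\nu$ is supported on $B_\infty^n$ because $\norm{x}_K\le1$ on $\operatorname{supp}\mu$.

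Then I would carry out the comparison. We may assume $C>1/16$ (otherwise the asserted bound is not positive and there is nothing to prove). Put $\epsilon_0:=\tfrac12\sqrt{\log(16C)/\log(64Cn)}$. The hypothesis on $m_K$ gives $0<\epsilon_0<m_K\le1$, and since $m_K$ is a median of $\norm{\cdot}_K$ we obtain $\mu(\epsilon_0K)\le\mu\{\norm{\cdot}_K<m_K\}\le1/2$. Applying Corollary~\ref{cor:farlinf} to $\tilde\nu$ at the radius $\epsilon_0\in(0,1)$ and using $\tilde\nu(\epsilon_0 B_\infty^n)=\mu(\epsilon_0 K)$,
\begin{equation*}
\alpha_{(B_\infty^n,\tilde\nu)}(\epsilon_0)\ge\frac{1}{2n}\bigl(1-\mu(\epsilon_0 K)\bigr)\ge\frac{1}{4n}.
\end{equation*}
On the other hand, Remark~\ref{rem:ineqforbeta} and the hypothesis $\alpha_{(K,\mu)}(t)\le Ce^{-ct^2 n}$ give $\alpha_{(B_\infty^n,\tilde\nu)}(\epsilon_0)\le16\,\alpha_{(K,\mu)}(\epsilon_0/(14\beta))\le16C\exp(-c\epsilon_0^2 n/(196\beta^2))$. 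Comparing the two estimates and solving for $\beta$ leaves $\beta^2\ge c\epsilon_0^2 n/(196\log(64Cn))$, i.e.\ $\beta\ge\epsilon_0\sqrt{cn}/(14\sqrt{\log(64Cn)})$; substituting the value of $\epsilon_0$ yields exactly $\beta\ge\frac{\sqrt{\log(16C)}}{28}\frac{\sqrt{cn}}{\log(64Cn)}$.

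The one point requiring care is that the inequality of Remark~\ref{rem:ineqforbeta} inherits from Theorem~\ref{thm:main} a smallness hypothesis, which here reads $16\alpha_{(K,\mu)}(\epsilon_0/(7\beta))\le1$, so I would dispose of this first by a dichotomy. If it fails at $\epsilon_0$, then $\alpha_{(K,\mu)}(\epsilon_0/(7\beta))>1/16$, and the Gaussian hypothesis forces $\beta>\epsilon_0\sqrt{cn}/(7\sqrt{\log(16C)})$, which already exceeds the target since $\log(16C)\le\log(64Cn)$; if it holds, the computation above applies verbatim. I do not expect a genuine obstacle here: the content of the proof is precisely the observation that $\tilde\nu$ is a symmetric measure on $B_\infty^n$ seen by both Corollary~\ref{cor:farlinf} and Remark~\ref{rem:ineqforbeta} at the same scale, together with the routine bookkeeping of the constants $14$, $16$, and $196$.
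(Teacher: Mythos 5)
Your proposal is correct and follows essentially the same route as the paper: apply Corollary~\ref{cor:farlinf} to the push-forward measure at the scale $\epsilon_0=\frac12\sqrt{\log(16C)/\log(64Cn)}$ (using the hypothesis on $m_K$ to get $\mu(\epsilon_0 K)\le 1/2$), compare with the transference bound of Theorem~\ref{thm:main}/Remark~\ref{rem:ineqforbeta}, and solve for $\beta$. The only difference is cosmetic: you dispose of the smallness hypothesis by a dichotomy directly on $16\alpha_{(K,\mu)}(\epsilon_0/(7\beta))\le1$, whereas the paper runs the analogous dichotomy on the quantity $\lambda/m_L$ via its assumption (\ref{eq:hyp}); both branches yield the stated bound.
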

\begin{proof}
Let $L=B_\infty^n$. Without loss of generality, we may assume that $B_\infty^n\subset K \subset \lambda B_\infty^n$ (as required in Theorem \ref{thm:main}) and $\beta=\lambda\frac{m_K}{m_L}$.

We also assume that

\begin{equation}\label{eq:hyp}
\displaystyle{\frac{\lambda}{m_L}<\frac{1}{14}\frac{\sqrt{cn}}{\sqrt{\log(64Cn)}}},
\end{equation}

otherwise the desired inequality follows.

Let $\epsilon_0=\frac{1}{2}\sqrt{\frac{\log(16C)}{\log{(64Cn)}}}$. Then $\mu(\normk{x}\leq \epsilon_0)\leq \mu(\normk{x}< m_K)\leq1/2.$

Clearly $\epsilon_0$ satisfies the hypothesis of Theorem \ref{thm:main}. Let $\alpha_{\nu}$ be the concentration function of $\nu$, the push forward measure of $\mu$ by $\pi:K\rightarrow B_\infty^n$. Therefore, combining this result and Corollary \ref{cor:farlinf} we have

\begin{eqnarray*}
% \nonumber to remove numbering (before each equation)
  \frac{1}{4n} &\leq& \frac{1-\mu(\epsilon_0 K)}{2n}=\frac{1-\nu(\epsilon_0 B_\infty^n)}{2n}\leq \alpha_\nu(\epsilon_0)\\
    &\leq& 16\alpha_\mu\left(\epsilon_0\frac{m_L}{14m_K \lambda}\right)\leq16C\exp\left(-c\epsilon_0^2\frac{m_L^2}{14^2m_K^2\lambda^2}n\right)
\end{eqnarray*}

By (\ref{eq:hyp}),

$$\frac{1}{4n}\leq16 C\exp\left(-cn\frac{1}{2^2\beta^2}\frac{\log(16C)}{14^2\log(64Cn)}\right),$$

which implies

$$\beta\geq\frac{\sqrt{\log(16C)}}{28}\frac{\sqrt{cn}}{\log(64Cn)}$$

as we wanted.
\end{proof}

\begin{rem}
A similar bound can be obtained for $\tilde{\beta}$.
\end{rem}

\begin{rem}
If we consider $\mu$ the Lebesgue measure restricted to $K$, then $m_K=2^{-1/n}$ and the hypothesis on $m_K$ imposed in the previous theorem is trivially satisfied for all $n\geq1$.
\end{rem}

\begin{rem}
     Theorem \ref{thm:binffar} implies $d_{BM}(K,B_\infty^n)\geq\frac{\sqrt{cn}}{28}\frac{\sqrt{\log(16C)}}{\log(64Cn)}$. However, a simpler argument using Corollary \ref{cor:farlinf} and Proposition \ref{prop:dec} gives the slightly better bound  $d_{BM}(K,B_\infty^n)\geq\sqrt{\frac{cn}{\log (2Cn)}}$.
\end{rem}

\begin{rem}
It is unknown whether the bound obtained in the previous theorem is sharp. However the extremality of $B_2^n$ with respect to $B_\infty^n$ along with the inequality (\ref{eq:betal2linf}) seem to leave some room for improvement.
\end{rem}

\section{Concentration properties transferred between two given measures}
Theorem \ref{thm:main} allows us to transmit a concentration inequality from a given metric probability space to the push-forward measure given by the natural map $\pi$. However, this new measure might not be related to a measure given in the target space $Y$. This section is devoted to give a concentration inequality with respect to a given measure defined in the target space. Thus, given two metric probability spaces, we will investigate the relation between their respective concentration functions applying similar ideas to those used in the previous section.

For a given norm $\|\cdot\|_L$ on $\Ren$, let us consider $\mu$ and $\nu$ two probability measures with densities $d\mu=f(\norm{x}_L)dx$ and $d\nu=g(\norm{x}_L)dx$ with respect to the Lebesgue measure. It is not hard to see that for some function $u:[0,+\infty)\rightarrow [0,+\infty)$, $u(0)=0$, the map $U:supp(\mu)\rightarrow\Ren$ defined by $Ux=\frac{u(\norm{x})}{\norm{x}}x$ pushes forward $\mu$ into $\nu$. The map $U$ has finite Lipschitz constant if $u$ has \cite{Bo-Le}. In what follows, we will not use this fact, but only the the assumption of $\norm{u}_{Lip}$ being finite.

\begin{thm}\label{thm:main1}
Let $X=(\Ren,\norm{\cdot}_K),\ Y=(\Ren,\norm{\cdot}_L)$ be normed spaces such that $\norm{\cdot}_K\leq\norm{\cdot}_L\leq\lambda \norm{\cdot}_K$. Let $\mu$ and $\nu$ be probability measures on $X$ and $Y$ respectively, with densities $d\mu=f(\norm{x}_L)dx$ and $d\nu=g(\norm{x}_L)dx$ with respect to the Lebesgue measure. Denote their concentration functions by $\|\cdot\|_K$ and $\|\cdot\|_L$ respectively by $\alpha_{\left(K,\mu\right)}$ and $\alpha_{\left(L,\nu\right)}$. Then for every $\epsilon>0$ such that $8(\alpha_{\left(K,\mu\right)}(\varepsilon /7||u||_{Lip}\lambda)+\alpha_{\left(K,\mu\right)}(\varepsilon m/7||u||_{Lip}^2m_L))\leq 1$ we have
\begin{equation*}
\alpha_{\left(L,\nu\right)}(\epsilon)\leq16\alpha_{\left(K,\mu\right)}\left(\frac{\varepsilon }{14||u||_{Lip}\lambda}\right),
\end{equation*}
where $m_{L}$ denotes a median of the function $\norm{\cdot}_L:\Ren\rightarrow \Re$, $m$ denotes a median of the function $u(\norm{\cdot}):\Ren\rightarrow \Re$ both with respect to the measure $\mu$, and $||u||_{Lip}$ is the Lipschitz constant of $u$.
\end{thm}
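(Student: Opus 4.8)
The plan is to follow the scheme of the proof of Theorem~\ref{thm:main}, with the map $\pi$ replaced by $U$. Fix a Borel set $A\subseteq Y$ with $\nu(A)=\mu(U^{-1}(A))\ge 1/2$, and let $\epsilon>0$. Since $\|\cdot\|_L\le\lambda\|\cdot\|_K$, the map $x\mapsto\|x\|_L$ is $\lambda$-Lipschitz and the map $x\mapsto u(\|x\|_L)$ is $\|u\|_{Lip}\lambda$-Lipschitz with respect to $\|\cdot\|_K$, so by \eqref{eq:conmedian} both concentrate around their $\mu$-medians, $m_L$ and $m$ respectively. (Note that, $U$ being radial for $\|\cdot\|_L$ with $\|Ux\|_L=u(\|x\|_L)$, the law of $\|\cdot\|_L$ under $\nu$ equals the law of $u(\|\cdot\|_L)$ under $\mu$, so $m$ is also a median of $\|\cdot\|_L$ with respect to $\nu$; this is why it is $m$, and not $m_L$, that will control the $\epsilon$-expansions in $Y$.) First I would introduce the good set
$$G=\{x\in\Ren: |\,\|x\|_L-m_L|<\theta_1\}\cap\{x\in\Ren: |u(\|x\|_L)-m|<\theta_2\},$$
choosing $\theta_1,\theta_2$ so that \eqref{eq:conmedian} bounds the $\mu$-measures of the two defining bad sets by $2\alpha_{(K,\mu)}(\epsilon m/7\|u\|_{Lip}^2m_L)$ and $2\alpha_{(K,\mu)}(\epsilon/7\|u\|_{Lip}\lambda)$ respectively, and then set $J=U^{-1}(A)\cap G$. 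The hypothesis $8(\alpha_{(K,\mu)}(\epsilon/7\|u\|_{Lip}\lambda)+\alpha_{(K,\mu)}(\epsilon m/7\|u\|_{Lip}^2m_L))\le1$ is exactly what yields $\mu(J)\ge 1/2-2\alpha_{(K,\mu)}(\epsilon/7\|u\|_{Lip}\lambda)-2\alpha_{(K,\mu)}(\epsilon m/7\|u\|_{Lip}^2m_L)\ge 1/4$.

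The core step, mirroring the claim \eqref{eq:Jcontpia}, is the inclusion
$$J_\rho^K\subseteq U^{-1}\!\left(A_\epsilon^L\right),\qquad \rho=\frac{\epsilon}{7\|u\|_{Lip}\lambda}.$$
To establish it, take $x\in J_\rho^K$ and pick $y\in J$ with $\|x-y\|_K\le\rho$; then $Uy\in A$, $y\in G$, and it remains to prove $\|Ux-Uy\|_L\le\epsilon$. Writing $s=\|x\|_L$, $t=\|y\|_L$ and inserting the central value $m/m_L$,
$$Ux-Uy=\frac{m}{m_L}\,(x-y)+\left(\frac{u(s)}{s}-\frac{m}{m_L}\right)x+\left(\frac{m}{m_L}-\frac{u(t)}{t}\right)y,$$
I would bound the three summands in $\|\cdot\|_L$ using: the elementary inequality $u(\tau)/\tau\le\|u\|_{Lip}$ for $\tau>0$ (which follows from $u(0)=0$ and Lipschitzness), and in particular $m/m_L\le\|u\|_{Lip}$; the estimates $\|x-y\|_L\le\lambda\|x-y\|_K\le\lambda\rho$, $|s-t|\le\|x-y\|_L$ and $|u(s)-u(t)|\le\|u\|_{Lip}|s-t|$, which move the required control from the uncontrolled point $x$ to $y$; and the membership $y\in G$, which pins $t$ near the positive number $m_L$ and $u(t)$ near $m$, and hence also keeps $s=\|x\|_L$ bounded away from $0$ (so that the possibly singular factor $u(s)/s$ remains harmless). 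Adding up the three contributions yields $\|Ux-Uy\|_L\le\epsilon$, i.e.\ $Ux\in A_\epsilon^L$.

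Finally, since $U$ pushes $\mu$ forward to $\nu$, the inclusion gives $U^{-1}((A_\epsilon^L)^c)\subseteq(J_\rho^K)^c$, and the Ledoux lemma \eqref{eq:Ledoux} then gives
$$\nu\!\left((A_\epsilon^L)^c\right)=\mu\!\left(U^{-1}((A_\epsilon^L)^c)\right)\le\mu\!\left((J_\rho^K)^c\right)\le\frac{4\alpha_{(K,\mu)}(\rho/2)}{\mu(J)}\le 16\,\alpha_{(K,\mu)}\!\left(\frac{\epsilon}{14\|u\|_{Lip}\lambda}\right),$$
so that, taking the supremum over all admissible $A$, we obtain the claimed bound on $\alpha_{(L,\nu)}(\epsilon)$. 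The step I expect to be the genuine obstacle is the distance estimate $\|Ux-Uy\|_L\le\epsilon$: one has to neutralise the a priori singular factor $u(\|x\|_L)/\|x\|_L$ --- which is precisely why the base point $y$, and through it $x$, must be kept in the region where $\|\cdot\|_L$ is comparable to its positive median --- and then track the numeric constants carefully so that exactly the two concentration-function arguments appearing in the hypothesis emerge (in particular the asymmetric weight $m/\|u\|_{Lip}^2m_L$ on the $\|\cdot\|_L$-localisation, reflecting that one must compare $m$-scaled and $m_L$-scaled quantities while keeping the roles of $\lambda$ and $\|u\|_{Lip}$ separate), and so that the factor $14=2\cdot7$ shows up in the final bound.
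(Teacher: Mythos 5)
Your outline reproduces the paper's proof scheme exactly: the two median windows, $J=U^{-1}(A)\cap G$, the inclusion $J^K_{\rho}\subseteq U^{-1}(A^L_\epsilon)$ with $\rho=\epsilon/(7\|u\|_{Lip}\lambda)$, Ledoux's two-set lemma, and even the same pivot decomposition of $Ux-Uy$ around the ratio $m/m_L$. But the one quantitative commitment you do make is the wrong way round, and it breaks precisely the step you single out as the crux. You attach the bound $2\alpha_{(K,\mu)}(\epsilon m/7\|u\|_{Lip}^2 m_L)$ to the $\|\cdot\|_L$-localisation and $2\alpha_{(K,\mu)}(\epsilon/7\|u\|_{Lip}\lambda)$ to the $u(\|\cdot\|_L)$-localisation (and you repeat this in the closing remark about the ``asymmetric weight $m/\|u\|_{Lip}^2m_L$ on the $\|\cdot\|_L$-localisation''). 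With your (correct) Lipschitz constants, $\lambda$ for $\|\cdot\|_L$ and $\lambda\|u\|_{Lip}$ for $u(\|\cdot\|_L)$, this forces window widths $\theta_1=\lambda\epsilon m/(7\|u\|_{Lip}^2m_L)$ and $\theta_2=\epsilon/7$. Since $m$ can equal $\|u\|_{Lip}m_L$, $\theta_1$ can be as large as $\lambda\epsilon/(7\|u\|_{Lip})$, and the third pivot term $\|y\|_L\,|u(t)/t-m/m_L|=|u(t)m_L-tm|/m_L$ (with $t=\|y\|_L$), which from $y\in G$ you can only bound by $\theta_2+m\theta_1/m_L$, is then of order $\lambda\epsilon$. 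Concretely, take $\|u\|_{Lip}=1$ and $u$ increasing but nearly constant on an interval of length $\lambda\epsilon/8$ just above $m_L$, so that $m\approx m_L$: a point $y$ with $\|y\|_L=m_L+\lambda\epsilon/8$ lies in both of your windows, yet this single pivot term is about $\lambda\epsilon/8$. Your triangle-inequality bound on $\|Ux-Uy\|_L$ is the sum of the three pivot terms, hence at least $\lambda\epsilon/8$ at such points, and so ``adding up the three contributions'' cannot certify $\|Ux-Uy\|_L\le\epsilon$ once $\lambda$ is moderately large; the proposed proof of the key inclusion does not close.

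The repair is the paper's (mirror-image) bookkeeping: with $\delta=\epsilon/(7m_L\|u\|_{Lip})$, localise $\|\cdot\|_L$ to width $\delta m_L=\epsilon/(7\|u\|_{Lip})$, which by \eqref{eq:conmedian} costs $2\alpha_{(K,\mu)}(\delta m_L/\lambda)=2\alpha_{(K,\mu)}(\epsilon/7\lambda\|u\|_{Lip})$, and localise $u(\|\cdot\|_L)$ to width $\delta m$; it is this second window that produces the $m$-weighted argument $\epsilon m/(7\|u\|_{Lip}^2m_L)$ of the hypothesis. With these windows, $\|x-y\|_K\le\delta m_L/\lambda$ and $m\le\|u\|_{Lip}m_L$, the three pivot terms are at most $4\delta m_L\|u\|_{Lip}$, $\delta m_L\|u\|_{Lip}$ and $2\delta m_L\|u\|_{Lip}$, summing to $7\delta m_L\|u\|_{Lip}=\epsilon$, and the remainder of your argument ($\mu(J)\ge1/4$ from the stated hypothesis, then Ledoux's lemma at radius $\delta m_L/2\lambda=\epsilon/14\|u\|_{Lip}\lambda$) goes through verbatim. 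Two side remarks: your correct Lipschitz constant $\lambda\|u\|_{Lip}$ for $u(\|\cdot\|_L)$ with respect to $d_K$ would actually place $\epsilon m/(7\lambda\|u\|_{Lip}^2m_L)$ in the hypothesis, whereas the paper uses the constant $\|u\|_{Lip}$ at that point; and the inclusion you claim is in fact true for any $\rho\le\epsilon/(3\lambda\|u\|_{Lip})$ with no localisation at all, since writing $Ux-Uy=\frac{u(t)}{t}(x-y)+\bigl(\frac{u(s)}{s}-\frac{u(t)}{t}\bigr)x$ and using $u(r)\le\|u\|_{Lip}r$ shows $U$ is $3\lambda\|u\|_{Lip}$-Lipschitz --- but that is a different argument from the pivot estimate your sketch relies on, so it does not rescue the proposal as written.
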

\begin{proof}
Let $A\subset L$ be a Borel set with $1/2\leq\nu(A)=\mu(U^{-1}(A))$, and let $\varepsilon>0$ be given.
Let $0<\delta=\frac{\epsilon}{7m_L\norm{u}_{Lip}}$. Since $u(\norm{x}_L)\leq \norm{u}_{Lip}\norm{x}_L$, we have that $m\leq||u||_{Lip}m_L$. Let
$$G_1=\{x\in \Ren: (1-\delta)m_L< \|x\|_L<(1+\delta)m_L\}$$
$$G_2=\{x\in \Ren: (1-\delta)m< u(\|x\|_L)<(1+\delta)m\},$$

and $J=U^{-1}(A)\cap G_1\cap G_2$. By (\ref{eq:conmedian}) we have that $$\mu(J)\geq1/2-2\alpha_{\left(K,\mu\right)}(\delta m_L/\lambda)-2\alpha_{\left(K,\mu\right)}(\delta m/||u||_{Lip}).$$
We claim that

\begin{equation}\label{eq:Jcontpia1}
\displaystyle{J^K_{\frac{\delta m_L}{\lambda}}\subset U^{-1}\left(A^L_\epsilon\right)}.
\end{equation}
Indeed, let $x\in J^K_{\frac{\delta m_L}{\lambda}}$. Then, there is a $y\in J$ with $\|x-y\|_K\leq\frac{\delta m_L}{\lambda}$. Since $y\in J$ we have that $U(y)\in A$, $$|\norm{y}_L-m_L|<\delta m_L\quad \text{and}\quad |u(\norm{y}_L)-m|<\delta m.$$ We will show that $U x\in A^L_{\varepsilon}$.

\begin{align*}
% \nonumber to remove numbering (before each equation)
   \norm{U(x)-U(y)}_L& =\norm{\frac{x}{\norm{x}_L}u(\norm{x}_L)-\frac{y}{\norm{y}_L}u(\norm{y}_L)}_L \\
    &\leq \frac{1}{m_L}\left[|u(\norm{x}_L)m_L-\norm{x}_Lm|+m\lambda\|x-y\|_K\right]\\
    &+\frac{1}{m_L}\left[|u(\norm{y}_L)m_L-\norm{y}_Lm|\right]
\end{align*}
On one hand,\\
    \begin{eqnarray*}
      % \nonumber to remove numbering (before each equation)
      \nonumber|u(\norm{y}_L)m_L-\norm{y}_Lm|&\leq& m_L\left|u(\norm{y}_L)-m \right|+m\left|\norm{y}_L- m_L\right| \\
       &\leq& 2m \delta m_L\leq2\delta m_L^2\norm{u}_{Lip}.
    \end{eqnarray*}
On the other hand,\\
      \begin{align*}
      % \nonumber to remove numbering (before each equation)
         |u(\norm{x}_L)&m_L-\norm{x}_Lm| \\
         &\leq m_L|u(\norm{x}_L)-u(\norm{y}_L)|+\left|u(\norm{y}_L)m_L-\norm{y}_Lm \right|+m\norm{x-y}_L \\
         &\leq m_L||u||_{Lip}|\|x\|_L-\|y\|_L|+2\delta m_L^2\norm{u}_{Lip}+m\lambda\|x-y\|_K\\
         &\leq m_L||u||_{Lip}\lambda\norm{x-y}_K+2\delta m_L^2\norm{u}_{Lip}+m\lambda\|x-y\|_K\\
         &\leq \delta m_L^2||u||_{Lip}+2\delta m_L^2\norm{u}_{Lip}+\delta m_L^2\norm{u}_{Lip}=4\delta m_L^2\norm{u}_{Lip}
      \end{align*}
Thus, \\
    $$\norm{U(x)-U(y)}_L\leq4\delta m_L\norm{u}_{Lip}+\delta m_L ||u||_{Lip}+2\delta m_L ||u||_{Lip}=7\delta m_L ||u||_{Lip},$$

and (\ref{eq:Jcontpia1}) follows. Now apply Lemma (\ref{eq:Ledoux}) to get
    $$\nu(A_\epsilon^c)=\mu(U^{-1}(A_\epsilon)^c)\leq{\mu((J_{\frac{\delta m_L}{ \lambda}})^c)}\leq\frac{4\alpha_{\left(K,\mu\right)}(\frac{\delta m_L}{2\lambda})}{\mu(J)}$$$$\leq\frac{4\alpha_{\left(K,\mu\right)}(\frac{\delta m_L}{2 \lambda})}{1/2-2\alpha_{\left(K,\mu\right)}(\delta m_L/\lambda)-2\alpha_{\left(K,\mu\right)}(\delta m/||u||_{Lip})}.$$ Hence, for $\epsilon>0$ such that
$8\left(\alpha_{\left(K,\mu\right)}\left(\frac{\varepsilon}{7||u||_{Lip}\lambda}\right)+\alpha_{\left(K,\mu\right)}\left(\frac{\varepsilon m}{7||u||_{Lip}^2m_L}\right)\right)\leq 1$
$$\nu(A_\epsilon^c)\leq16\alpha_{\left(K,\mu\right)}\left(\frac{\varepsilon }{14||u||_{Lip} \lambda}\right),$$
which concludes the proof.

\end{proof}

\begin{rem}
If we consider $X=(\Ren,\gamma_1^n)$ and $Y=(\Ren,\lambda_{B_1^n})$ both of them endowed with the distance induced by the $\ell_1$ norm, where $\gamma_1^n$ denotes the measure with density $\frac{1}{2^n}\exp{(-\norm{x}_1)}$ and $\lambda_{B_1^n}$ is the uniform measure restricted to $B_1^n$, then a simple computation (see \cite{Bo-Le} section 5) shows that $\norm{u}_{Lip}\approx\frac{1}{n}$ in the previous theorem. Thus, Theorem \ref{thm:main1} together with Talagrand's well known concentration inequality for $\gamma_1^n$ imply  $\alpha_{(B_1^n,\lambda_{B_1^n})}(\varepsilon)\leq K\exp\{-c\varepsilon^2 n\}$, where $c, K>0$ are universal constants (see \cite{Ar-Vil}).
\end{rem}

\begin{rem}
The same argument can be applied to $\gamma_p^n$ with density $c_p^{-1}e^{\norm{x}_p^p/p}dx$ and $\lambda_{B_p^n}$ where $c_p=2\Gamma(1+1/p)p^{1/p}$ ($1<p\leq 2$), to obtain $\alpha_{B_p^n,\lambda_{B_p^n}}(\varepsilon)\leq Kexp\{-c\epsilon^2 n\}.$
\end{rem}

\section*{Acknowledgement}
This work was carried out while the second named author was visiting the
Department of Mathematical Analysis at the University of Sevilla. The author would like to thank the Spanish Ministry of Science and Innovation grant MTM2009-08934 who partially funded his visit.

\bibliography{Art_Conc}
\bibliographystyle{plain}
\end{document}